\newcommand{\SR}{\mathcal{S}(\mathbb{R})}
\newcommand{\LdeuxR}{L^{2}(\mathbb{R})}
\newcommand{\I}{\mathcal{I}}
\newcommand{\F}{\mathcal{F}}
\newcommand{\R}{\mathbb{R}}
\newcommand{\n}{\vert \vert}
\newtheorem{definition}{Definition}
\newtheorem{remark}{Remark}
\newtheorem{lemma}{Lemma}
\newtheorem{theorem}{Theorem}
\title{On the instability of a nonlocal conservation law}
\author{ Afaf Bouharguane \footnote{Institut de Math\'ematiques et Mod\'elisation de Montpellier, UMR 5149 CNRS, Universit\'e Montpellier 2, Place Eug\`ene Bataillon, CC 051
34095 Montpellier, France. Email:\, {\sffamily bouharg@math.univ-montp2.fr}
} 
}
\date{\today}
\begin{document}
\maketitle

\begin{abstract}
We are interested in a nonlocal conservation law which describes the morphodynamics of sand dunes sheared by a fluid flow, recently proposed by Andrew C. Fowler and studied by \cite{alibaud,alvarez}. We prove that constant solutions of Fowler's equation are non-linearly unstable. We also illustrate this fact using a finite difference scheme.\\
\end{abstract}

\noindent \textbf{Keywords:} Fractional anti-diffusive equation, instability, finite difference schemes. \\

\noindent \textbf{Mathematics Subject Classification:} 35L65, 45K05, 35G25, 35C07, 35B35, 65M06.

\section{Introduction}

Partial Differential Equations with nonlocal or fractional operators are widely used to model scientific problems in mechanics, physics, signal processing and other subjects, see for example \cite{signal} and references therein. We consider in this paper a nonlocal conservation law which appears in the formation and dynamics of sand structures such as dunes and ripples \cite{fowler2,lagree}. Namely, Fowler (\cite{fowler2,fowler3,fowler1}) introduced the following equation \label{fowler1}
\begin{equation}
\begin{cases}
\partial_t u(t,x) + \partial_x\left(\frac{u^2}{2}\right) (t,x) + \I [u(t,\cdot)] (x)  -
\partial^2_{xx} u(t,x) = 0 & t \in (0,T), x \in \R, \\
u(0,x)=u_{0}(x) & x \in \R,
\end{cases}
\label{fowlereqn}
\end{equation}
where $u=u(t,x)$ represents the dune height, $u_0 \in L^{2}(\R)$ is an initial condition, $T$ is any given positive time and $\I$
is a nonlocal operator defined as follows: for any
Schwartz function $\varphi \in \SR$ and any $x \in \mathbb{R}$,
\begin{equation}
\I [\varphi] (x) := \int_{0}^{+\infty} |\zeta|^{-\frac{1}{3}}
\varphi''(x-\zeta) d\zeta . \label{nonlocalterm}
\end{equation}
Equation \eqref{fowlereqn} is valid for a river flow over an erodible bottom $u(t,x)$ with slow variation. The nonlocal term appears after a subtle modeling of the basal shear stress. %See \cite{alibaud,alvarez,afaf} for theoretical results on this equation.\\
This operator appears also in the work of Kouakou \& Lagr\'ee \cite{kouakou,lagree}. \\
The nonlocal term $\I$ can be seen as a fractional power of order $2/3$ of the Laplacian with the bad sign. Indeed, it has been proved \cite{alibaud} 
\begin{equation} 
 \F \left( \I [\varphi]-\varphi''\right) (\xi) = \psi_{\mathcal{I}}(\xi)  \F
\varphi (\xi)
\label{fourier}
\end{equation}
where 
\begin{equation}
\psi_{\mathcal{I}}(\xi)=4 \pi^2 \xi^2-a_{\I}
|\xi|^{\frac{4}{3}} + i \: b_{\I} \xi |\xi|^{\frac{1}{3}},
\label{psiexpression}
\end{equation}
 with $a_\I, b_\I$ positive constants
%$a_{\I}=\frac{1}{2}\Gamma(\frac{2}{3}), b_{\I}=\frac{\sqrt{3}}{2}\Gamma(\frac{2}{3}) $,
and $\F$ denotes the Fourier transform. One simple way to establish this fact is the derivation of a new formula for the operator $\I$ \cite{alibaud}
\begin{equation}
\I [\varphi](x)= C_{\I} \int_{-\infty}^{0}
\frac{\varphi(x+z)-\varphi(x)-\varphi'(x) z}{|z|^{7/3}} \:
dz,\label{intformula}
\end{equation}
with $C_\I=\frac{4}{9}$. \\ 
The operator $\I[u]$ is a weighted mean of second derivatives of $u$ with the bad sign and has therefore an anti-diffusive effect and creates instabilities which are controlled by the diffusive operator $-\partial_{xx}^2$. This remarkable feature enabled to apply this model for signal processing. Indeed, the diffusion is used to reduce the noise whereas the nonlocal anti-diffusion  is used to enhance the contrast \cite{signal}. \\

\begin{remark} 
For causal functions (i.e. $\varphi(x) = 0 $ for $x<0$), this operator is 
 up to multiplicative constant, the Riemann-Liouville fractional derivative operator which is defined as follows \cite{pod}
 \begin{equation}
\frac{1}{\Gamma(2/3)} \int_0^{+\infty} \frac{\varphi^{''}(x-\xi)}{|\xi|^{1/3}} d\xi= \frac{d^{-2/3}}{d x^{-2/3}} \varphi'' (x) = 
\frac{d^{4/3}}{d x^{4/3}} \varphi (x),
\end{equation}
where $\Gamma$ denotes the Euler function.
\end{remark}

Recently, some results regarding this equation have been obtained, namely, existence of travelling-waves $u_{\phi}(t,x) = \phi(x - ct)$ where $\phi \in C^1_{b}(\R)$ and $c \in \R$ represents wave velocity, the global well-posedness for $L^2$-initial data, the failure of the maximum principle, the local-in-time well-posedness in a subspace of $C^1_{b}$ \cite{alibaud, alvarez} and
the global well-posedness in a $L^2$-neighbourhood of $C^1_{b}$, solving namely for  $u = u_{\phi} +v$, with $v \in L^2(\R)$ \cite{afaf}.
For this purpose, the following Cauchy problem solved by the perturbation $v$ has been considered:
\begin{equation}
\begin{cases}
\partial_t v(t,x) + \partial_x(\frac{v^2}{2}+ u_{\phi} v)(t,x) + \I[v(t, \cdot)](x) -
\partial^2_{xx} v(t,x) = 0 & t \in (0,T), x \in \R, \\
v(0,x)=v_{0}(x) & x \in \R,
\end{cases}
\label{fowlermodif}
\end{equation}
where $v_0 \in L^{2}(\R)$ is an initial perturbation and $T$ is any given positive time. \\
%One main focus of this paper is the study of travelling-wave solutions of \eqref{fowlereqn}. Indeed, an interesting topic is to know if the shape of this travelling-wave is maintained when it is perturbed. This raises the question of the stability of travelling-waves. In this paper, our main result is the proof of the instability of travelling-waves solutions to \eqref{fowlereqn}. More precisely, we exhibit a disturbance which grows without bound meaning that the original travelling-wave is unstable. 

%To begin with, it has been proved the global well-posedness in a $L^2$-neighbourhood of $C^1_{b}$, namely $u = u_{\phi} +v$ \cite{afaf}. For that, the following Cauchy problem has been considered:
%\begin{equation}
%\begin{cases}
%\partial_t v(t,x) + \partial_x(\frac{v^2}{2}+ u_{\phi} v)(t,x) + \I[v(t, \cdot)](x) -
%\partial^2_{xx} v(t,x) = 0 & t \in (0,T), x \in \R, \\
%v(0,x)=v_{0}(x) & x \in \R,
%\end{cases}
%\label{fowlermodif}
%\end{equation}
%where $v_0 \in L^{2}(\R)$ is an initial perturbation and $T$ is any given positive time. \\
Let us note also that any constant is solution of the Fowler equation. We shall prove below that these solutions are unstable. \\
To prove that a solution $u_\phi$ is unstable when $\phi$ is constant, we introduce the notion of \emph{mild solution} (see Definition \ref{mild}) based on Duhamel's formula \eqref{duhamel}. We also give some numerical results that illustrate this fact. \\
%To distinguish between the instability which stems from the perturbation and numerical instabilities, we get numerical stability criteria for the numerical scheme considered. \\

The remaining of this paper is organized as follows: in the next section, we define the notion of mild solution and we give some results.
%\textcolor{red}{ some properties on the kernel $K$ of $\I-\partial_{xx}^2$ that will be needed in the sequel}. 
Section \ref{nostability} contains the proof of the instability. We introduce in section \ref{numerik} an explicit finite difference scheme for which we give numerical simulations to illustrate the theory of the previous section. 

\section{Duhamel formula and some results}

\begin{definition}
\label{mild}
Let $u_\phi$ be a constant, $T>0$ and $v_{0} \in L^{2}(\R)$. We say that $v \in L^\infty((0,T);L^2(\R)) $ is a \textbf{mild solution} to \eqref{fowlermodif} if for any $t \in \left(0,T\right)$,
\begin{equation}
v(t,x)=K(t, \cdot)\ast v_{0}(x)- \int^{t}_{0} \partial_{x}K(t-s,\cdot)\ast \left(\frac{v^{2}}{2}\right)(s,\cdot)(x) \: ds
\label{duhamel}
\end{equation}
where $K(t,x)=\F^{-1}\left(e^{-t \phi_{\I}(\cdot)}\right)(x)$
is the kernel of the operator $\I-\partial^2_{xx} + u_\phi \partial_x$ and $\phi_\I(\xi) = 4 \pi^2 \xi^2-a_{\I}
|\xi|^{\frac{4}{3}} + i \: b_{\I} \xi |\xi|^{\frac{1}{3}} + i\: 2 \pi u_\phi \xi $.% is defined in \eqref{psiexpression}.
\end{definition}

The expression \eqref{duhamel} is the Duhamel formula and is obtained using the spatial Fourier transform.                                                                           The use of this formula allows to prove the local-in-time existence with the help of the contracting fixed point theorem. The global existence is obtained thanks to $L^2$ a priori estimate. We refer to \cite{alibaud,afaf} for the proof. \\

\begin{lemma}\label{kernel2}
Let $t>0$. Then, $\partial_x K(t,\cdot) \in L^2(\R)$ and satisfies
\begin{equation}
||\partial_x K(t, \cdot) ||_{L^2(\R)} \leq C\left(t^{-3/4} + e^{\alpha t}\right), 
\end{equation}
where $C$ is a positive constant independent of $t$. 
\end{lemma}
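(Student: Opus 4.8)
The plan is to pass to the Fourier side. With the normalisation of $\F$ under which $-\partial_{xx}^2$ has symbol $4\pi^2\xi^2$ (the one implicit in the definition of $\phi_\I$), one has $\F(\partial_x K(t,\cdot))(\xi) = 2\pi i\,\xi\, e^{-t\phi_\I(\xi)}$, and since $|e^{-t\phi_\I(\xi)}|^2 = e^{-2t\,\mathrm{Re}\,\phi_\I(\xi)}$ — the imaginary part of $\phi_\I$ only affects the phase — Plancherel's theorem gives
\begin{equation*}
\n \partial_x K(t,\cdot)\n_{\LdeuxR}^2 \;=\; 4\pi^2 \int_\R \xi^2\, e^{-2t\,\mathrm{Re}\,\phi_\I(\xi)}\, d\xi .
\end{equation*}
Finiteness of this integral will drop out of the estimates below and at the same time establishes $\partial_x K(t,\cdot)\in\LdeuxR$. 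Everything hinges on the explicit form $\mathrm{Re}\,\phi_\I(\xi) = 4\pi^2\xi^2 - a_\I|\xi|^{4/3}$: the anti-diffusive term $-a_\I|\xi|^{4/3}$ is of lower order and is dominated by the diffusion $4\pi^2\xi^2$ at high frequencies.

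Concretely I would set $R := (a_\I/2\pi^2)^{3/2}$, so that for $|\xi|\ge R$ one has $a_\I|\xi|^{4/3}\le 2\pi^2\xi^2$ and hence $\mathrm{Re}\,\phi_\I(\xi)\ge 2\pi^2\xi^2$; and I would set $\alpha := a_\I R^{4/3}$, noting that $-\mathrm{Re}\,\phi_\I(\xi)\le a_\I|\xi|^{4/3}$ for every $\xi$, so $e^{-2t\,\mathrm{Re}\,\phi_\I(\xi)}\le e^{2\alpha t}$ on $\{|\xi|<R\}$. Splitting the integral at $|\xi|=R$ gives
\begin{equation*}
\int_\R \xi^2 e^{-2t\,\mathrm{Re}\,\phi_\I(\xi)}\,d\xi \;\le\; e^{2\alpha t}\!\int_{|\xi|<R}\!\xi^2\,d\xi \;+\; \int_{|\xi|\ge R}\!\xi^2 e^{-4\pi^2 t\xi^2}\,d\xi \;\le\; C\,e^{2\alpha t} + \int_\R \xi^2 e^{-4\pi^2 t\xi^2}\,d\xi ,
\end{equation*}
and the last integral equals $C t^{-3/2}$ after the change of variables $\eta=\sqrt{t}\,\xi$. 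Hence $\n\partial_x K(t,\cdot)\n_{\LdeuxR}^2 \le C(t^{-3/2}+e^{2\alpha t})$, and taking square roots together with $\sqrt{a+b}\le\sqrt a+\sqrt b$ yields $\n\partial_x K(t,\cdot)\n_{\LdeuxR}\le C(t^{-3/4}+e^{\alpha t})$, which is the claim.

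There is no serious analytic obstacle; the only point deserving care is conceptual. Since $\mathrm{Re}\,\phi_\I$ is \emph{negative} on the band $\{|\xi|<R\}$ — precisely the anti-diffusive instability at the heart of the paper — the symbol $e^{-t\phi_\I(\xi)}$ is not bounded by $1$, so one cannot simply dominate by the heat kernel. But that band is compact, hence it can contribute at most the factor $e^{\alpha t}$ and never a divergence of the frequency integral; all the $t\to 0^+$ singularity comes from the high frequencies, where the order $2/3$ term is a harmless lower-order perturbation of the Laplacian and one recovers the same $t^{-3/4}$ as for $\partial_x$ of the heat kernel. I would also observe in passing that the same computation with the factor $\xi^2$ replaced by $1$ shows $K(t,\cdot)\in\LdeuxR$, so in fact $K(t,\cdot)\in H^1(\R)$, which legitimises the identity $\F(\partial_x K) = 2\pi i\xi\,\F K$ used at the start.
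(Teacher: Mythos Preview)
Your proof is correct and follows essentially the same route as the paper's: Plancherel, split the frequency integral at a threshold where the diffusion dominates the anti-diffusion, bound the compact low-frequency band by $e^{2\alpha t}$ and the high-frequency tail by a Gaussian integral yielding $t^{-3/2}$, then take square roots. The only cosmetic difference is the choice of constants: the paper takes $\alpha=-\min_\xi\mathrm{Re}\,\phi_\I(\xi)$ (the same $\alpha$ appearing in Remark~\ref{remarkL2} and Theorem~\ref{theoreme2}), whereas your $\alpha=a_\I R^{4/3}$ is a slightly cruder upper bound --- both suffice for the lemma as stated.
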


\begin{proof}
By Plancherel formula, we have
\begin{eqnarray*}
||\partial_x K(t, \cdot)||^2_{L^2(\R)} &=&  ||\F\left( \partial_x K(t, \cdot)\right) ||^2_{L^2(\R)} = ||\xi \mapsto 2 i \pi \xi e^{-t\phi_\I(\xi)} ||^2_{L^2(\R)}, \\
&=& 2 \int_0^{+\infty} 4 \pi^2 \xi^2 e^{-2t(4\pi^2 \xi^2- a_\I \xi^{4/3})} \, d\xi.
\end{eqnarray*}
Let $\xi_0>0$ such that for all $\xi>\xi_0 $, 
$$ 4\pi^2 \xi^2- a_\I \xi^{4/3} \geq \xi^2.$$
Then, if we denote by $\alpha = - \min Re(\phi_\I)$ (see Figure \ref{psifigure}), we have
\begin{eqnarray*}
||\partial_x K(t, \cdot)||^2_{L^2(\R)}
&\leq & 8 \pi^2  \int_0^{\xi_0} \xi^2 e^{-2t(4\pi^2 \xi^2- a_\I \xi^{4/3})} \, d\xi +  8 \pi^2 \int_{\xi_0}^{+\infty} \xi^2 e^{-2t(4\pi^2 \xi^2- a_\I \xi^{4/3})} \, d\xi, \\
&\leq & 8 \pi^2   \int_0^{\xi_0} \xi^2 e^{2t \alpha} \, d\xi + 8 \pi^2  \int_{0}^{+\infty} \xi^2 e^{-2t\xi^2} \, d\xi, \\
&\leq & 8 \pi^2  \frac{\xi_0^3}{3} e^{2t \alpha}  + \frac{4\pi^2 }{\sqrt{2}} t^{-3/2} \int_{0}^{+\infty} \xi^2 e^{-\xi^2} \, d\xi, \\
&\leq & C \left(t^{-3/2} + e^{2\alpha t} \right), 
\end{eqnarray*}
which completes the proof of this lemma.
\end{proof}

\begin{remark}
\label{remarkL2}
Using again Plancherel formula, we have for any initial data $v_0$ with values in $L^2(\R)$, the following $L^2$-estimate \cite{alibaud}:
\begin{equation*}
||v(t, \cdot)||_{L^2(\R)} \leq e^{\alpha t} ||v_0||_{L^2(\R)}.
\end{equation*}
\end{remark}

\section{Instability\label{nostability}}

In this section, we give the proof of the instability.  \\ 

Let us first note that a fundamental property of the kernel $K$ is the non-positivity \cite{alibaud}, see Figure \ref{kernelfig}. This feature enabled to prove the failure of the maximum principle. We use again this property to show that the constant solutions of the Fowler equation are unstable. 

\begin{figure}[htp]
\begin{center}
  % Requires \usepackage{graphicx}
  % replace aims_logo.eps by your figure file name
  \includegraphics[width=9cm,height=5cm]{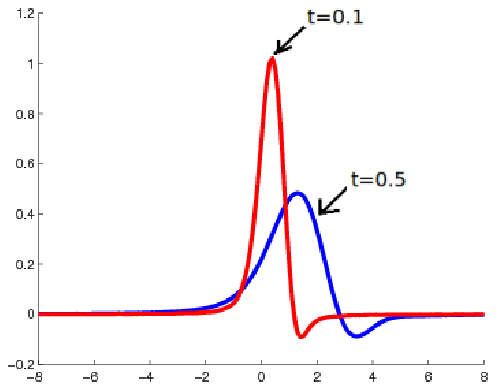}
  \caption{Evolution of the kernel $K$ for $t=0.1$ and $t=0.5$ s }\label{kernelfig}
  \end{center}
\end{figure}

%\begin{figure}[h!]
%	\centering
%        \subfigure[ The kernel of $\I$ for $t=0.1$ (red) and $t=0.5$s (blue). ]
%	{\includegraphics[scale = 0.4]{kernel.png} \label{kernelfig} }
%	\subfigure[ Behaviour of $\mbox{Re}(\psi_\I)$. ]
%	{\includegraphics[scale=0.25]{psi.png}  }
%        \caption{ Evolution of $K$ and $\mbox{Re}(\psi_\I)$.
%}
%\end{figure}
 
\begin{theorem}
Any constant solution $u_\phi$ of the Fowler equation is unstable, i.e. 
$\exists \varepsilon >0, \forall \delta >0, \exists v_0 \in L^2(\R)$ \mbox{ an initial data with } $||v_0||_{L^2(\R)} \leq \delta$ \mbox{ and } $t_0>0$ \mbox{ such that } $||v(t_0, \cdot)||_{L^2(\R)} > \varepsilon$, 
\label{theoreme2}
where $v(t, \cdot)$ is the solution of \eqref{fowlermodif} with $v(0,\cdot)=v_0$. 
\end{theorem}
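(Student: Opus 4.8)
The plan is to exploit the instability already visible at the linear level: the symbol $\phi_\I$ has $\operatorname{Re}\phi_\I(\xi) = 4\pi^2\xi^2 - a_\I|\xi|^{4/3} < 0$ on a band of low frequencies, so the linear semigroup $e^{-t\phi_\I}$ amplifies those modes like $e^{\alpha t}$ with $\alpha = -\min\operatorname{Re}\phi_\I > 0$. The strategy is the classical one for proving nonlinear instability from linear instability (in the spirit of the Grenier/Guo--Strauss-type arguments, but here made elementary since we only need the weaker conclusion that \emph{some} orbit escapes a fixed ball): fix $\varepsilon>0$ once and for all (independent of $\delta$), and for each $\delta$ choose $v_0$ to be a small multiple of a fixed low-frequency profile whose Fourier support lies in the region where $\operatorname{Re}\phi_\I<0$, say $v_0 = \eta\, w$ with $\widehat{w}$ supported where $\operatorname{Re}\phi_\I(\xi)\le -\alpha/2$ and $\eta = \eta(\delta)\to 0$. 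One then argues by contradiction: suppose the orbit stays in the ball of radius $\varepsilon$ for all time.

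First I would write the Duhamel formula \eqref{duhamel} and estimate the nonlinear (Duhamel integral) term in $L^2$ using Lemma \ref{kernel2}: since $\|\partial_x K(t-s,\cdot)\|_{L^2} \le C\big((t-s)^{-3/4} + e^{\alpha(t-s)}\big)$ and $\|v^2(s,\cdot)/2\|_{L^1} = \tfrac12\|v(s,\cdot)\|_{L^2}^2 \le \tfrac12\varepsilon^2$ under the contradiction hypothesis, Young's inequality ($L^1 * L^2 \subset L^2$) gives a bound of the nonlinear term by $C\varepsilon^2\int_0^t\big((t-s)^{-3/4}+e^{\alpha(t-s)}\big)\,ds \le C\varepsilon^2(t^{1/4}+\alpha^{-1}e^{\alpha t})$ — quadratic in $\varepsilon$, hence small relative to the linear part if $\varepsilon$ is chosen small. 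Second, I would get a lower bound on the linear term $\|K(t,\cdot)\ast v_0\|_{L^2}$: by Plancherel it equals $\|e^{-t\phi_\I}\widehat{v_0}\|_{L^2} \ge e^{(\alpha/2)t}\|\widehat{v_0}\|_{L^2} = e^{(\alpha/2)t}\|v_0\|_{L^2}$ because $\widehat{v_0}$ is supported where $-\operatorname{Re}\phi_\I \ge \alpha/2$. Third, combining the two via the triangle inequality applied to \eqref{duhamel},
\begin{equation*}
\|v(t,\cdot)\|_{L^2} \ge e^{(\alpha/2)t}\|v_0\|_{L^2} - C\varepsilon^2\big(t^{1/4}+\alpha^{-1}e^{\alpha t}\big).
\end{equation*}
The right-hand side, with $\|v_0\|_{L^2}=\eta$ fixed but small, grows (the $e^{\alpha t/2}$ term dominates the $e^{\alpha t}$ correction only if $\varepsilon$ is small relative to $\eta$, so care is needed in the order of quantifiers).

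The delicate point — and the main obstacle — is precisely the interplay of the quantifiers: $\varepsilon$ must be fixed \emph{before} $\delta$, yet the nonlinear error carries an $e^{\alpha t}$ that beats the linear growth $e^{\alpha t/2}$. The clean way around this is to run the estimate only up to the first exit time $t_0 := \inf\{t : \|v(t,\cdot)\|_{L^2} = \varepsilon\}$ (finite by assumption of the contradiction, or we are already done). On $[0,t_0]$ one has $\|v(s,\cdot)\|_{L^2}\le\varepsilon$, so the nonlinear term is bounded by $C\varepsilon\,\|v(s,\cdot)\|_{L^2}$ inside the integral rather than by $C\varepsilon^2$ crudely; feeding this into \eqref{duhamel} and applying a Gronwall-type inequality to $m(t):=\sup_{s\le t}e^{-(\alpha/2)s}\|v(s,\cdot)\|_{L^2}$ yields $m(t)\le \|v_0\|_{L^2}+C\varepsilon\!\int_0^t m(s)\,ds$ (the singular kernel $(t-s)^{-3/4}$ being integrable, and the $e^{\alpha(t-s)}$ piece absorbed against the $e^{-\alpha s/2}$ weight after choosing the weight exponent slightly below $\alpha$), whence $\|v(t,\cdot)\|_{L^2}\le e^{(\alpha/2 + C\varepsilon)t}\|v_0\|_{L^2}$ — but this is only an upper bound and does not by itself force escape. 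To force escape one instead keeps the \emph{lower} bound above and chooses $\varepsilon$ small enough that $C\varepsilon^2\alpha^{-1} < \tfrac12\eta$ is impossible unless $\eta$ is comparably small; so in fact one should argue: fix $\varepsilon = 1$ (say), and given $\delta$ set $\eta = \delta$; then either the orbit leaves the ball of radius $\varepsilon$ at some finite $t_0$ — done — or it stays inside forever, in which case the displayed lower bound with the exit-time bootstrap gives $\|v(t,\cdot)\|_{L^2}\ge \tfrac12 e^{(\alpha/2)t}\delta \to \infty$, contradicting boundedness by $\varepsilon$. Either way an escape time $t_0$ with $\|v(t_0,\cdot)\|_{L^2} > \varepsilon$ exists, proving the theorem. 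I expect the write-up to spend most of its effort making this bootstrap/contradiction rigorous, in particular verifying that the $e^{\alpha(t-s)}$ contribution to $\|\partial_x K\|_{L^2}$ does not destroy the argument — which works because that term only matters for $t-s$ bounded away from $0$, where it is harmless against the exponential lower bound, while the small-time singularity $(t-s)^{-3/4}$ is integrable.
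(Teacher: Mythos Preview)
Your overall framework is right --- linear instability from the negative part of $\operatorname{Re}\phi_\I$, Duhamel, Lemma~\ref{kernel2} for the nonlinear remainder --- and you correctly locate the difficulty: the kernel estimate carries an $e^{\alpha t}$ factor while your linear lower bound grows only like $e^{(\alpha/2)t}$. But the repair you propose does not close. Under the hypothesis $\|v(s)\|\le\varepsilon$ the Duhamel remainder is bounded below only by $-C\varepsilon^2\alpha^{-1}e^{\alpha t}$, which for every fixed $\delta$ dominates $\delta e^{(\alpha/2)t}$ as $t\to\infty$; the displayed lower bound therefore tends to $-\infty$ and the claimed inequality $\|v(t)\|\ge\tfrac12 e^{(\alpha/2)t}\delta$ is not justified. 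The Gronwall manoeuvre you sketch, as you note yourself, produces only an upper bound and cannot be converted into the needed lower one. The underlying obstruction is quantitative: with linear rate $\beta=\alpha/2$ one needs the remainder to stay below $\delta e^{\beta T}$ at the escape time $T\sim\beta^{-1}\log(1/\delta)$, but the remainder is of order $e^{\alpha T}\sim\delta^{-\alpha/\beta}=\delta^{-2}$, which explodes.

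The idea you are missing is that the linear rate $\beta$ must be taken \emph{arbitrarily close to $\alpha$}, depending on $\delta$. The paper fixes a time step $t_0$, chooses $N$ so that $\delta e^{\alpha Nt_0}$ equals a fixed constant of order one, and then places $\widehat{v_0}$ on an interval $[c,d]$ where $\operatorname{Re}\phi_\I\le -\beta$ with $\beta=\alpha-\gamma$ and $\gamma<(\ln 2)/(Nt_0)$, so that $e^{\beta Nt_0}\ge\tfrac12 e^{\alpha Nt_0}$. For the remainder it uses not the boundedness hypothesis but the a~priori estimate $\|v(s)\|\le e^{\alpha s}\|v_0\|$ of Remark~\ref{remarkL2}; this is genuinely sharper here, since it makes the remainder of size $\delta^2 e^{2\alpha Nt_0}=(\delta e^{\alpha Nt_0})^2=O(1)$, comparable to but (after the choice of constants) strictly smaller than the linear part $\delta e^{\beta Nt_0}\ge\tfrac12\delta e^{\alpha Nt_0}=O(1)$. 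Technically the paper organises this through a discrete telescoping
\[
v(nt_0)=S(nt_0)v_0+\sum_{k=0}^{n-1}S\big((n-1-k)t_0\big)\big[T(t_0)-S(t_0)\big]v(kt_0),
\]
applying the single-step bound $\|T(t_0)w-S(t_0)w\|\le b_0\|w\|^2$ at each stage. Your continuous approach can be made to work, but only after replacing the fixed support condition $\operatorname{Re}\phi_\I\le-\alpha/2$ by a $\delta$-dependent one that concentrates $\widehat{v_0}$ ever closer to the minimiser of $\operatorname{Re}\phi_\I$, and after trading the boundedness hypothesis for the a~priori $L^2$ bound.
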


%\begin{remark}
%The shape of this perturbation is given by
%\begin{equation*}
%v_{0}(x) = \varepsilon \left\{
%    \begin{array}{ll}
%         2\frac{\sin(\frac{b-a}{2}x)}{x}e^{-i \frac{a+b}{2} x}  & \mbox{if } x \neq 0, \\
%         b-a & \mbox{otherwise.}
%    \end{array}
%\right.
%\end{equation*}
%\end{remark}

\begin{proof}
We adapt the strategy used by De Bouard in \cite{anne}, making it more elementary in our setting. \\
We denote by $S(t)$ the linear semigroup associated with equation \eqref{fowlermodif} i.e. $S(t)w = K(t, \cdot)\ast w$. Hence, we have
\begin{equation}
T(t)v_0 := v(t, \cdot) = S(t)v_0 - \frac{1}{2} \int_0^t \partial_x K(t-s, \cdot) \ast v^2(s, \cdot) \, ds,  
\end{equation}
where $T$ denotes the nonlinear semigroup associated with equation \eqref{fowlermodif}.\\
First, by Young inequality, Lemma \ref{kernel2} and Remark \ref{remarkL2}, we have
\begin{eqnarray}
||T(t)v_0-S(t)v_0 ||_{L^2(\R)} &\leq& \frac{1}{2} \int_0^t ||\partial_x K(t-s, \cdot)||_{L^2(\R)} ||v(s, \cdot)||_{L^2(\R)}^2 \, ds, \nonumber \\
&\leq & \frac{C}{2} \int_0^t \left[ (t-s)^{-3/4} + e^{\alpha(t-s)} \right] e^{2\alpha s} ||v_0||_{L^2(\R)}^2 \, ds, \nonumber \\ 
&\leq & b(t) ||v_0||_{L^2(\R)}^2,
\end{eqnarray}
where $b(t) = \frac{C}{2} e^{2\alpha t} (4 t^{1/4} + \frac{1}{\alpha}e^{\alpha t})>0$ for all $t\in (0,T)$. \\
Hence, we obtain for $t_0>0$ fixed,
\begin{equation}
||T(t_0)v_0-S(t_0)v_0 || \leq  b_0 ||v_0||_{L^2(\R)}^2,
\label{difft0}
\end{equation}
where $b_0=b(t_0)$. \\
Assume now that the constant $u_\phi$ is a stable solution of the Fowler equation i.e, 
$$\forall \varepsilon >0, \exists \eta >0, ||v_0|| < \eta \Rightarrow ||v(t,\cdot)||_{L^2(\R)} < \varepsilon, $$ 
for all positive $t$. 
Let then 
\begin{equation*}
0<\varepsilon < \frac{e^{\alpha t_0} - 1}{16 b_0},
\end{equation*}
and let $\eta>0$ be such that $||v_0||_{L^2(\R)} < \eta \Rightarrow ||v(t, \cdot)|| \leq \varepsilon$, for all positive $t$. \\
Let $N$ be an integer large enough such that
\begin{equation}
e^{\alpha t_0 N} \geq \frac{e^{\alpha t_0} - 1 }{4 \eta b_0}.
\label{Nchoisi}
\end{equation}
Let us now consider the following initial data: $v_0 = \delta w_0$, where $\F(w_0) = \frac{1}{\sqrt{d-c}} 1_{[c,d]}$,  with  $0<c<d$ satisfying (see Figure \ref{psifigure}): 
\begin{itemize}
\item $\mbox{Re}\left( \phi_{\I}\right)(c)<\mbox{Re}\left( \phi_{\I}\right) (d)=-\beta<0$,
\item for all $\xi \in [c,d], \mbox{Re}\left( \phi_{\I}\right)(\xi) \leq - \beta$,
\item $\beta = \alpha - \gamma$ where $ 0< \gamma < \frac{\ln(2)}{Nt_0}$.
\end{itemize}
And $\delta$ is defined as
\begin{equation}
\delta  = e^{-\alpha N t_0} \frac{e^{\alpha t_0} - 1 }{4 b_0}.
\label{definitiondelta}
\end{equation}
The shape of $w_0$ is given by
\begin{equation*}
w_{0}(x) = \frac{1}{\sqrt{d-c}} \left\{
    \begin{array}{ll}
         2\frac{\sin(\frac{d-c}{2}x)}{x}e^{-i \frac{d+c}{2} x}  & \mbox{if } x \neq 0, \\
         d-c & \mbox{otherwise.}
    \end{array}
\right.
\end{equation*}
\begin{figure}[h!]
	\centering
	\includegraphics[width=8cm,height=55mm]{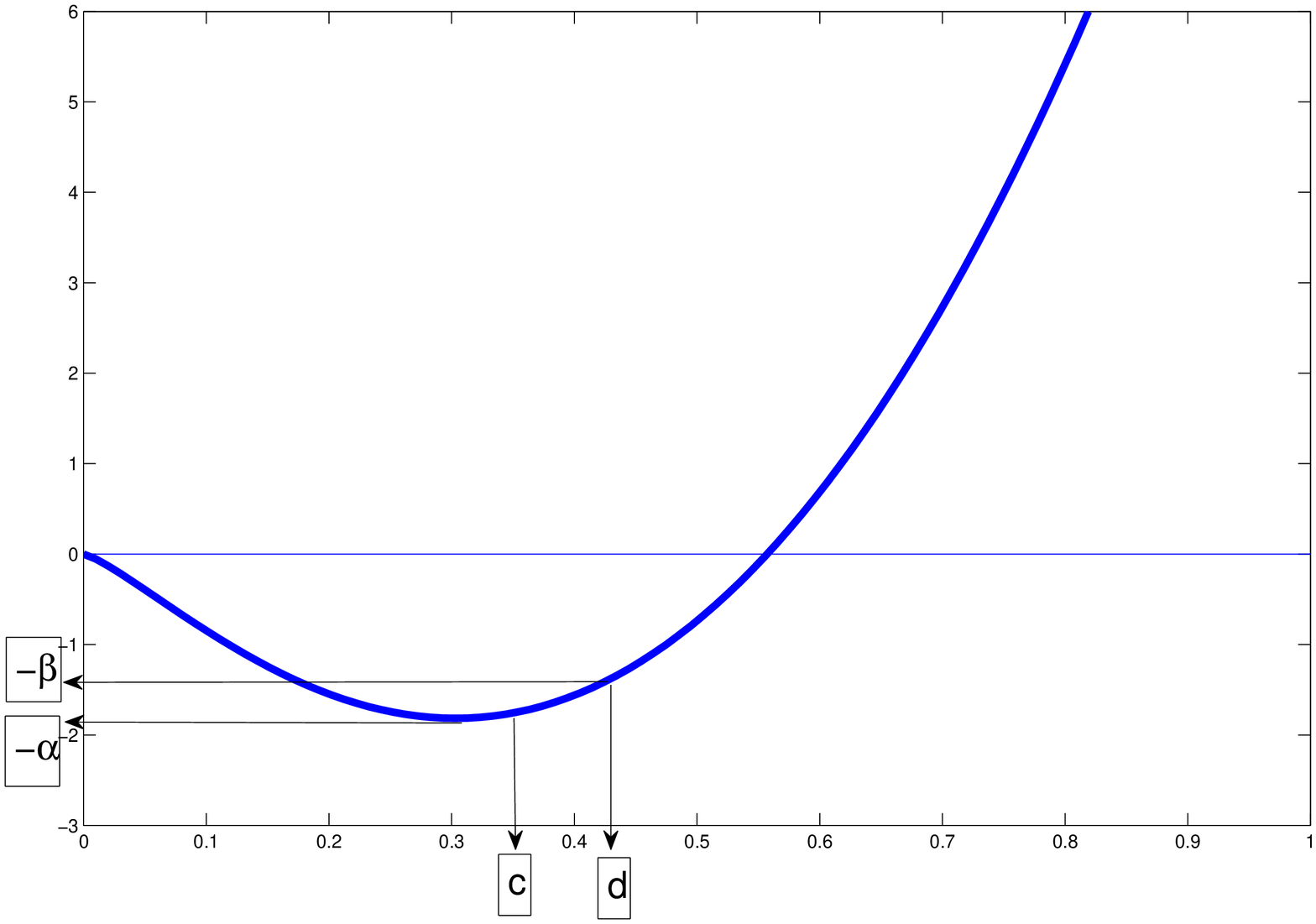} 
	\caption{Behaviour of $ \mbox{Re}\left( \phi_\I\right) $ }
	\label{psifigure}
\end{figure}
It follows from \eqref{Nchoisi} and \eqref{definitiondelta} that $||v_0||_{L^2(\R)}=\delta \leq \eta$, and thus for this initial data, we get that $||v(t,\cdot)||_{L^2(\R)} \leq \varepsilon$ for all $t>0$.  \\
For $n  \in \left\lbrace 0, \cdots, N \right\rbrace $, we have
\begin{eqnarray*}
v(nt_0) &=& S(nt_0)v_0 + \sum_{k=0}^{n-1} S\left( (n-1-k)t_0\right)\left[v((k+1)t_0)-S(t_0)v(kt_0)\right], \\
&=&   S(nt_0)v_0 + \sum_{k=0}^{n-1} S\left( (n-1-k)t_0\right)\left[T(t_0)v(kt_0)-S(t_0)v(kt_0)\right].
\end{eqnarray*}
Hence, we get using \eqref{difft0} and the $L^2$-estimate given in Remark \ref{remarkL2}
\begin{eqnarray*}
||v(nt_0) - S(nt_0)v_0||_{L^2(\R)} &\leq & \sum_{k=0}^{n-1} e^{\alpha t_0(n-k-1)} ||v((k+1)t_0)-S(t_0)v(kt_0)||_{L^2(\R)}, \\
&\leq &  \sum_{k=0}^{n-1} e^{\alpha t_0(n-k-1)} b_0||v(kt_0)||^2_{L^2(\R)}, \\
&\leq & \sum_{k=0}^{n-1} e^{\alpha t_0(n-k-1)} b_0 e^{2\alpha k t_0} ||v_0||_{L^2(\R)}^2, \\
&=& \delta^2 b_0 e^{\alpha t_0 (n-1)} \frac{e^{\alpha n t_0}-1}{e^{\alpha t_0}-1}, \\
&\leq& \frac{\delta}{4} e^{\alpha t_0 n}.  
\end{eqnarray*}
Moreover, by Plancherel formula, we have
\begin{eqnarray*}
\n S(t)v_0\n_{\LdeuxR}^2 = \n K(t,\cdot) \ast v_0 \n_{\LdeuxR}^2 &=&    \n \F(K(t,\cdot) \ast v_0) \n_{\LdeuxR}^2, \\
&= & \int_\R |\F(K(t,\cdot))(\xi) \F(v_0)(\xi)|^2 d\xi, \\
&=&  \int_c^d \frac{\delta^2 }{d-c} e^{-2t \, \mbox{Re} (\phi_{\I})(\xi)} d\xi, \\
&\geq& e^{2 \beta t}  \n  v_0 \n_{\LdeuxR}^2.
\end{eqnarray*}
%where $c,b>0$ are such that $\mbox{Re}(\psi_{\I})(a)<\mbox{Re}(\psi_{\I})(b)=-\alpha<0$ and $\forall \xi \in [a,b]$, $
%\mbox{Re}(\psi_\I)(\xi) \leq - \alpha $. 
%we then get that
%\begin{equation*}
%\n K(t,\cdot) \ast v_0 \n_{\LdeuxR}^2 \geq \varepsilon^2 (b-a)e^{2 \alpha t}=e^{2 \alpha t}  \n  v_0 \n_{\LdeuxR}^2.
%\end{equation*}
We finally infer that 
\begin{eqnarray*}
||v(Nt_0)||_{L^2(\R)} &\geq& ||S(Nt_0)v_0||_{L^2(\R)} - ||v(Nt_0) - S(Nt_0)v_0||_{L^2(\R)}, \\ 
&\geq & \delta e^{\beta N t_0} - \frac{\delta}{4} e^{\alpha t_0 N}, \\
&\geq& \frac{e^{\alpha t_0}-1}{16 b_0} > \varepsilon, 
\end{eqnarray*}
which gives us a contradiction and completes the proof of this theorem.

\end{proof}

\begin{remark}
We can give a physical interpretation of this result: a flat profile $u_\phi=constant$ is unstable under the morphodynamics described by the Fowler model.
\end{remark}

\section{Numerical simulations\label{numerik}}

%\subsection{Finite difference approximations}

The spatial discretization is given by a set of points ${x_{j}; j=1,...,N}$ and the discretization in time is represented by a sequence of times $t^0=0<...<t^n<...<T$. For the sake of simplicity we will assume constant step size $\delta x$ and $\delta t$ in space and time, respectively. The discrete solution at a point will be represented by $u^n_j \approx u(t^n,x_j)$. 
The schemes consist in computing approximate values $u^n_j$ of solution to \eqref{fowlereqn} on $[n\delta t, (n+1) \delta t[\times [j \delta x, (j+1) \delta x[$ for $n \in \mathbb{N}$ and $j \in \mathbb{N}$ thanks to the following relation:
\begin{equation}
\label{FDscheme}
\frac{u_{j}^{n+1}-u_{j}^{n}}{\delta t}+ \frac{1}{2} \frac{(u_{j}^{n})^2-(u_{j-1}^{n})^2}{\delta x}- \frac{u_{j+1}^{n}-2u_{j}^{n}+u_{j-1}^{n}}{\delta x^{2}}+ \I_{\delta x}[u^{n}]_{j}=0,
\end{equation} 
where $\I_{\delta x}$ is the discretization of the nonlocal term $\I$. 
%Note that the nonlinear and the Laplacian terms are discretized using a forward and centered finite difference approximations, respectively.
We use a basic quadrature rule to approximate $\I$ given by \eqref{nonlocalterm}
%We begin by considering two discretizations $\I_{\delta x}^{1}, \I_{\delta x}^2$ for the operator $\I$ corresponding to forms \eqref{nonlocalterm4}  and  \eqref{intformula}, respectively. In both cases, we use a basic quadrature rule to approximate $\I$:
\begin{equation} \label{discretization14}
\I_{\delta x}[\varphi]_{j}=\delta x^{-4/3} \sum^{+ \infty}_{l=1} l^{-1/3} \left(\varphi_{j-l+1}-2\varphi_{j-l}+ \varphi_{j-l-1}\right).
\end{equation}
%\begin{equation}\label{discretization24}
%\I^{2}_{\delta x}[v]_{j}=C_{\I} \delta x^{-4/3} \sum^{+ \infty}_{l=1} l^{-7/3} \left(v_{j-l}-v_{j}+\frac{v_{j+1}-v_{j-1}}{2}l\right).
%\end{equation}
Let us note that for compactly supported initial datum the sum is a finite sum.
In this section, we want to simulate the instability stated previously. To this aim we must be careful to distinguish between the instability which stems from the model and numerical instabilities, which could be caused by a careless discretization.   \\
The numerical stability ensures that the difference between the approximate solution and the exact solution remains bounded for increasing $t$ for $\delta x, \delta t$ given. \\

As noticed, the Fowler model amplifies  slowly the low frequencies whereas the high frequencies are quickly dampened. Thus, the classical notion of $A$-stability (strong stability) and $C$-stability are not suitable nor desirable. A new definition of stability for this model has been considered in \cite{azaf} and numerical stability criteria have been obtained.\\

In the following numerical test, we have to take care to choose $\delta x$ and $\delta t$ accurately following the stability condition, see \cite{azaf}. We expose in Figure \ref{rides} the evolution of an initial flat bottom disturbed by a small bump for different times. As we can see the perturbation creates small ripples that grow with time without bound. This result proves that the perturbed solution goes away from the non-perturbed solution. This
illustrates the instability of constant solutions for the Fowler model \eqref{fowlereqn}. Figure \ref{logevo} shows the log-plot for the $L^2$-norm evolution of this perturbed bottom, which confirms the validity of the estimate given in Remark \ref{remarkL2}.
% We can notice that $\lim_{t \rightarrow +\infty}  ||v(t, .)||_{L^2}(\R) = + \infty$, which illustrates the exponential explosion. 

\begin{figure}[h!]
	\centering
        \subfigure[ Evolution of the solution disturbed. ]
	{\includegraphics[scale = 0.2]{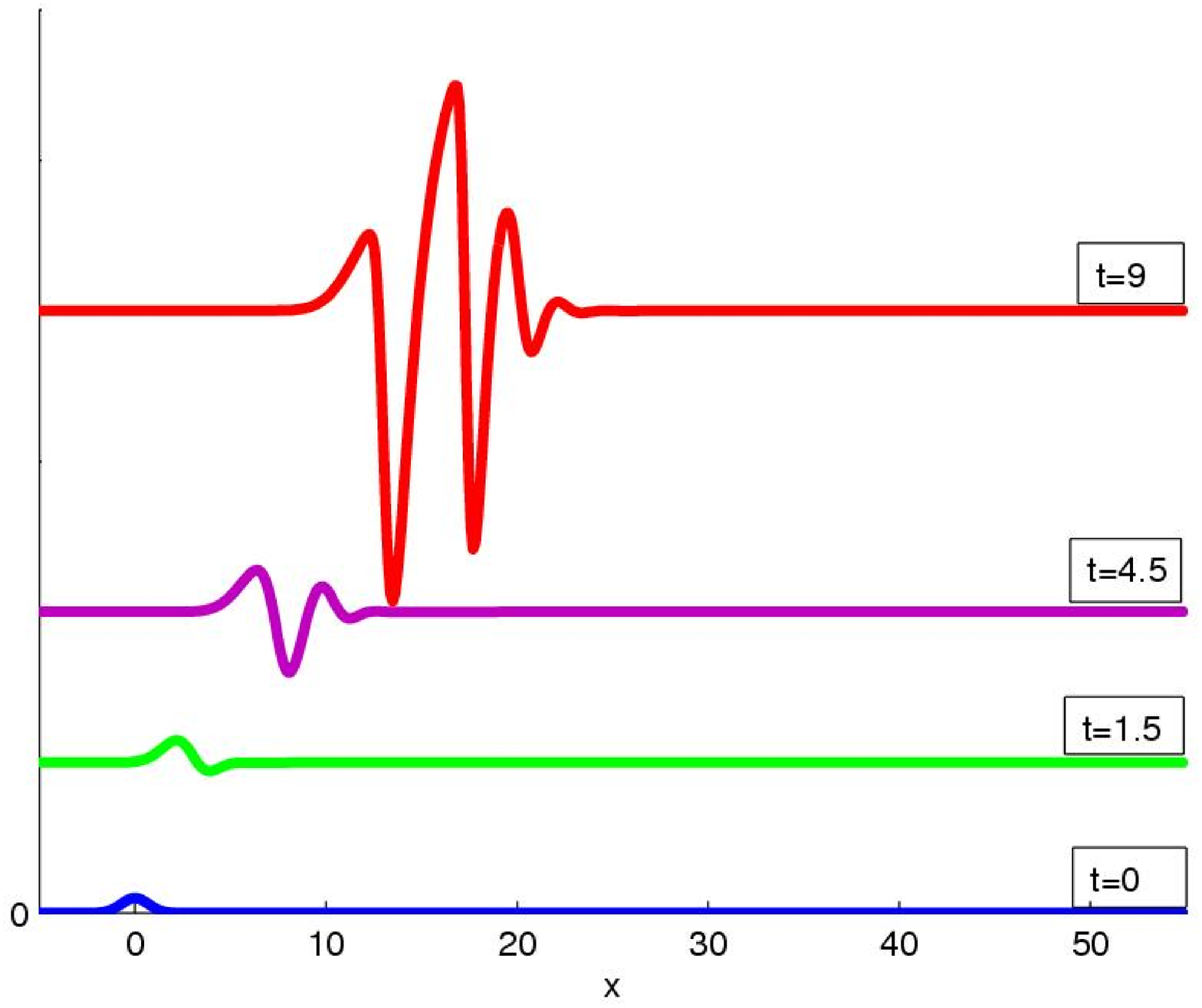} \label{rides} }
	\subfigure[ Evolution of $\log( ||v(t, \cdot)||_{L^2})$ ]
	{\includegraphics[scale=0.4]{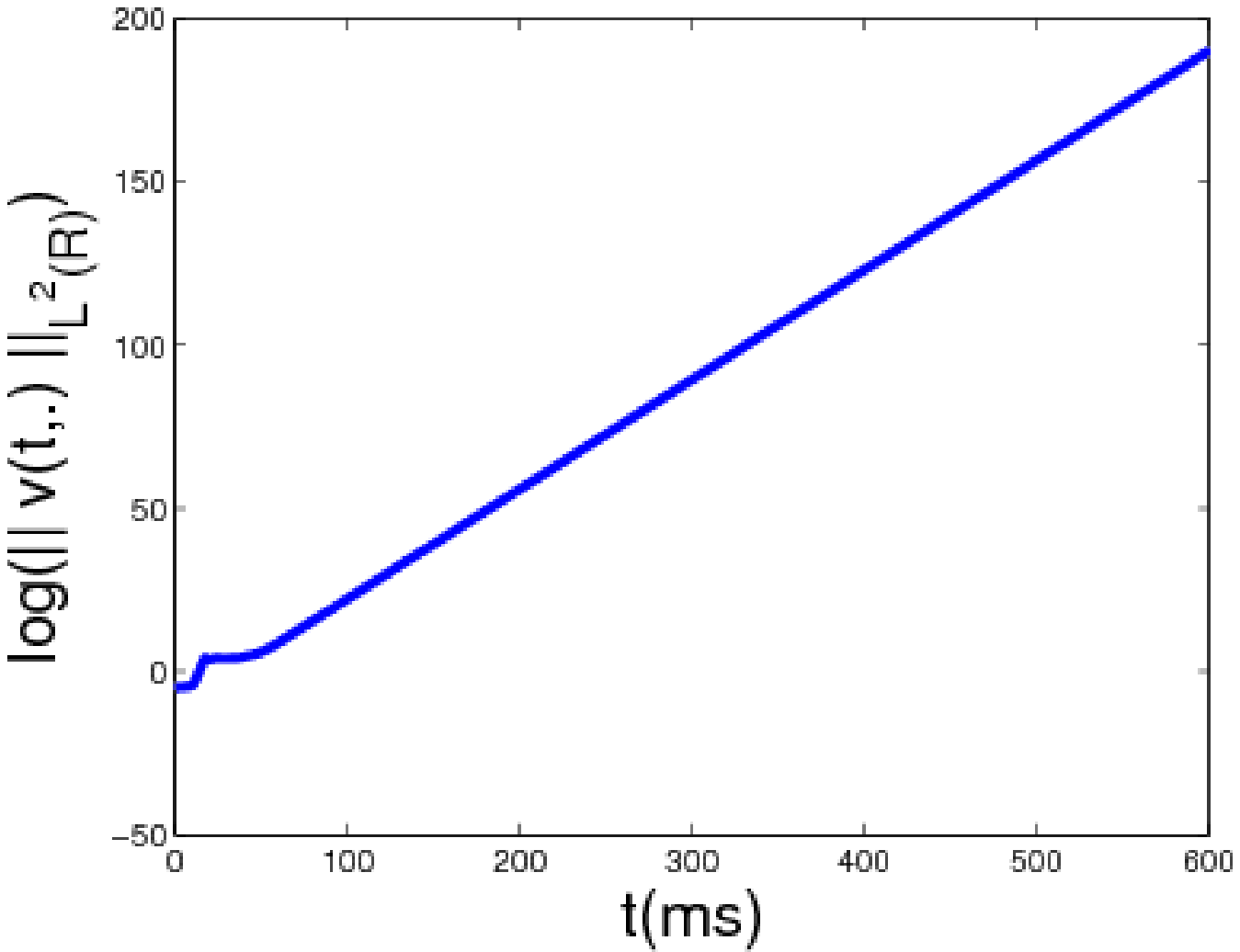} \label{logevo} }
        \caption{ Evolution of a flat bottom disturbed by a small bump. \label{instability}
}
\end{figure}

\section{Acknowledgements} 
We thank Pascal Azerad and Cl\'ement Gallo for helpful comments. The author is supported by the ANR MATHOCEAN ANR-08-BLAN-0301-02

\end{document}